 \newtheorem{Thm}{Theorem}[section]
 \newtheorem{Prop}[Thm]{Proposition}
 \newtheorem{Cor}[Thm]{Corollary}
\theoremstyle{remark}
\theoremstyle{definition}
\newcommand\ol{\overline}
\newcommand{\id}{\operatorname{id}}
\newcommand{\Id}{\operatorname{Id}}
\newcommand\inv{^{-1}}
\newcommand\leer{\text{---}}
\newcommand\Hom{\operatorname{Hom}}
\renewcommand\hom{\underline{\operatorname{hom}}}
\newcommand\homl{\hom_\ell}
\newcommand\hev{\epsilon}
\newcommand\hcoev{\eta}
\newcommand{\ot}{\otimes}
\newcommand\tR{\times_R}
\newcommand{\HMod}[4]{{{}^{#1}_{#3}}{\mathfrak M}^{#2}_{#4}}
\newcommand{\LMod}[1]{{}_{#1}\mathfrak M}
\newcommand{\LComod}[1]{\HMod{#1}{}{}{}}
\newcommand\WLC{\mathcal W_\ell}
\newcommand\LC{\mathcal Z_\ell}
\newcommand\WRC{\mathcal W_r}
\newcommand\CTR{\mathcal Z}
\newcommand\monstruk{\xi}
\newcommand\hbr{\sigma}
\newcommand\lidu[1]{#1^*}
\newcommand\Lidu[1]{#1{^{\vee}}}
\newcommand\ev{\operatorname{ev}}
\newcommand\db{\operatorname{db}}
\newcommand\op{{\operatorname{op}}}
\newcommand\C{\mathcal C}
\newcommand\D{\mathcal D}
\newcommand\F{\mathcal F}
\begin{document}
\title[Dual Hopf algebroids]{The dual and the double of a Hopf algebroid are Hopf algebroids}
\author{Peter Schauenburg}
\address{Institut de Math{\'e}matiques de Bourgogne --- UMR 5584 du CNRS\\
Universit{\'e} de Bourgogne\\
BP 47870, 21078 Dijon Cedex\\
France
}
\email{peter.schauenburg@u-bourgogne.fr}
\subjclass[2010]{16T99,18D10}
\keywords{bialgebroid, Hopf algebroid, duality}
\begin{abstract}
  Let $H$ be a $\times$-bialgebra in the sense of Takeuchi. We show that if $H$ is $\times$-Hopf, and if $H$ fulfills the finiteness condition necessary to define its skew dual $\Lidu H$, then the coopposite of the latter is $\times$-Hopf as well.

  If in addition the coopposite $\times$-bialgebra of $H$ is $\times$-Hopf, then the coopposite of the Drinfeld double of $H$ is $\times$-Hopf, as is the Drinfeld double itself, under an additional finiteness condition.
\end{abstract}
\maketitle
\section{Introduction}
Takeuchi \cite{Tak:GAAA} introduced $\times_R$-bialgebras as a generalization of ordinary bialgebras replacing the commutative base ring $k$ by a base algebra $R$ that can be noncommutative. Takeuchi's $\times$-algebras can be viewed as being to groupoids what bialgebras are to groups. We will also call them bialgebroids and refer to the survey \cite{Boh:HA} for references and relations to similar notions of Hopf algebroid or quantum groupoid.

In  \cite{MR1800718} three elements were added to $\times_R$-bialgebra theory. The first is a certain notion of $\times_R$-Hopf algebra. It is designed to be analogous in some respects to the notion of a Hopf algebra. The other two elements are the construction of the dual of a $\times_R$-bialgebra $H$, defined if a certain one of the four $R$-module structures of $H$ makes it finitely generated projective, and the construction of the Drinfeld double of $H$, defined if $H$ admits a dual and is Hopf.

These notions raise the following obvious questions:
\begin{itemize}
\item If $H$ is Hopf and admits a dual, is the latter also Hopf?
\item If $H$ is Hopf and admits a dual, is the double of $H$ also Hopf?
\end{itemize}
These questions are mentioned in \cite{Boh:HA}; it seems that they did not undergo a serious attack since, although we do believe that they should be considered rather basic for the theory of Hopf algebroids. A directly positive answer is perhaps to naive (see below) but we do give positive answers to what we believe are the ``correct'' versions of the questions.

$\times_R$-Hopf algebras can be viewed as defined by the existence of a certain additional structure map, which is not directly analogous to the antipode $S$ of an ordinary Hopf algebra, but to a combination of the antipode with the comultiplication. They can also be viewed as defined by a categorical property of their module categories. We use the latter characterization for our proofs; an explicit formula for the Hopf structure will not be given, simply because we did not manage to extract an explicit formula from the abstract arguments. This failure may be related to the fact that our result was so far unknown; it may equally well reflect the author's clumsiness, and perhaps someone else will write down the Hopf structures with ease.

\section{Dual and double Hopf algebroids}
\label{sec:dual-hopf-algebroids}

In this section we present and prove ---in a sense--- our main results on the Hopf properties of duals and doubles of Hopf algebroids. The proofs will be by reference to the next section, where the needed categorical results are proved. We also postpone until then recalling the necessary terminology, which we will assume known for the time being.

As to $\times_R$-bialgebras, we can afford to be succinct on the preliminaries, since the technical details of the definition of a $\times_R$-bialgebra are not needed explicitly.

Let $k$ be a commutative ring, and $R$ a $k$-algebra. Write $\ol R=R^\op$ for the opposite algebra of $R$, and $R^e=R\ot\ol R$ for the enveloping algebra.
An $R^e$-ring is a $k$-algebra $H$ equipped with an algebra homomorphism $R^e\to H$; in particular there is an underlying functor $\LMod H\to\LMod{R^e}$, and the latter is a monoidal category since it can be identified with the category of $R$-bimodules. A $\times_R$-bialgebra $H$ is an $R^e$-ring whose module category $\LMod H$ is endowed with the structure of a monoidal category such that the underlying functor $\LMod H\to \LMod{R^e}$ is a strict monoidal functor. This means that $H$ has a comultiplication that allows to endow the tensor product over $R$ of two left $H$-modules $M$ and $N$, taken with respect to the left $R$-module structure of $N$ and the left $\ol R$-module structure of $M$, with a left $H$-module structure.

By definition in \cite{MR1800718}, a $\times_R$-bialgebra $H$ is a $\times_R$-Hopf algebra if the underlying functor $\LMod H\to\LMod{R^e}$ preserves right inner hom-functors. The coopposite of $H$, which is a $\times_{R^\op}$-bialgebra, is $\times_{R^\op}$-Hopf if and only if the underlying functor preserves left inner hom-functors, since its module category is the same category with the order of tensor products reversed. We will call a $\times_R$-bialgebra whose coopposite is $\times_{R^\op}$-Hopf a $\times_R$-anti-Hopf algebra (following a usage of ``anti-Hopf'' introduced for ordinary bialgebras by Doi and Takeuchi).

A $\tR$-bialgebra $H$ is said to be left finite if it is finitely generated projective as left $\ol R$-module, and right finite if it is finitely generated projective as left $R$-module. If $H$ is left finite, then one can construct a $\times_R$-bialgebra structure on $\Lidu H:=\Hom_{\ol R-}(H,R)$, which we will call the left skew dual of $H$. We shun details on the $\times_R$-bialgebra structure in favor of a characterization of $\Lidu H$-modules assembled from \cite{MR1800718} and \cite{selfduality}. By the first, the skew dual $\Lidu H$ can be characterized through an equivalence of monoidal categories $\LComod H\cong\LMod{H^\vee}$. We do not recall the definition of $H$-comodules, but rather the fact that by the second reference, $\LComod H$ can in turn be characterized by an equivalence of $\LComod H$ with the left weak center $\WLC(\LMod H\to\LMod{R^e})$, whose definition we will recall in the next section. Thus, we have a category equivalence $\LMod{\Lidu H}\cong\WLC(\LMod H\to\LMod{R^e})$ commuting with the underlying functors to $\LMod{R^e}$.

After these more than flimsy explanations, we are ready to state:
\begin{Thm}
  Let $H$ be a left finite $\times_R$-Hopf algebroid. Then $\Lidu H$ is a $\times_R$-anti-Hopf algebroid.
\end{Thm}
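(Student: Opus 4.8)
The plan is to translate the statement entirely into a property of the forgetful functor out of a weak centre, and then to deduce that property from the categorical results assembled in the next section. By definition, ``$H$ is $\times_R$-Hopf'' means precisely that the underlying functor $U\colon\LMod H\to\LMod{R^e}$ preserves right inner hom-functors; and ``$\Lidu H$ is $\times_R$-anti-Hopf'' means --- unwinding the definitions recalled above --- that the coopposite of $\Lidu H$ is $\times_{R^\op}$-Hopf, i.e.\ that the underlying functor $\LMod{\Lidu H}\to\LMod{R^e}$ preserves \emph{left} inner hom-functors. Now the equivalence $\LMod{\Lidu H}\cong\LComod H\cong\WLC(\LMod H\to\LMod{R^e})$ recalled above is compatible with the underlying functors to $\LMod{R^e}$, so the underlying functor of $\Lidu H$ is identified with the forgetful functor $P\colon\WLC(U)\to\LMod{R^e}$, $(D,\sigma)\mapsto D$. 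Hence the theorem reduces to the following categorical assertion, to be proved in the next section: if a strict monoidal functor $U\colon\C\to\D$ --- with $\C$ and $\D$ as well-behaved as $\LMod H$ and $\LMod{R^e}$, in particular $\D$ closed --- preserves right inner hom-functors, then the forgetful functor $\WLC(U)\to\D$ from its left weak centre preserves left inner hom-functors. The interchange of sides (right for $U$, left for $\WLC(U)$) is a genuine feature of the centre construction and not a slip.

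For that categorical assertion the route I would take is the following. First, reinterpret the hypothesis ``$U$ preserves right inner homs'' as a Galois-type bijectivity inside the weak centre: namely, that every half-braiding $\sigma_X\colon D\ot U(X)\to U(X)\ot D$ appearing in an object of $\WLC(U)$ is automatically invertible, so that the left weak centre $\WLC(U)$ actually coincides with the honest left centre $\LC(U)$. (For an ordinary bialgebra $H$ this is the familiar fact that $H$ is Hopf exactly when the half-braidings carried by its comodules, viewed as objects of $\WLC(\LMod H\to\vect)$, are invertible, the inverse half-braiding encoding the antipode; carrying this through for a general $\times_R$-Hopf algebroid is the substantive part of the next section.) Left inner homs exist in $\WLC(U)\cong\LMod{\Lidu H}$ by the adjoint functor theorem, so what remains is to identify the underlying object of $\homl\bigl((A,\sigma^A),(B,\sigma^B)\bigr)$ with the left inner hom $\homl(A,B)$ of $\D$. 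For this I would equip $\homl(A,B)$ with a half-braiding built from $\sigma^A$ and $\sigma^B$ via the defining adjunction of the internal hom, where the contravariant $A$-slot forces the use of the inverse braiding $(\sigma^A_X)\inv$ --- legitimate precisely because $\WLC(U)=\LC(U)$ --- and then verify that $\homl(A,B)$ so equipped represents the left internal-hom functor of $\WLC(U)$.

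The main obstacle is this last verification. One must check that the transported datum really is a half-braiding (the hexagon-type coherence and the unit condition), and --- more seriously --- that the object of $\WLC(U)$ it produces genuinely represents the left internal hom there, i.e.\ that $P$ \emph{creates} left inner homs rather than merely receiving a comparison morphism from them; the handedness swap means the argument cannot be run symmetrically, and it is exactly the invertibility of the half-braidings --- the incarnation of the Hopf property of $H$ --- that makes the contravariant slot of the internal hom manageable at all. Consistently with the warning in the introduction, this approach delivers the existence of the $\times_R$-anti-Hopf structure on $\Lidu H$ but not a closed formula for it; producing one would require making the inverse-braiding manipulation above completely explicit.
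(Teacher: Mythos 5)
Your first paragraph is exactly the reduction the paper performs: identify $\LMod{\Lidu H}$ with $\WLC(\LMod H\to\LMod{R^e})$ compatibly with the underlying functors, and reduce the theorem to showing that the forgetful functor from the left weak centralizer preserves left inner hom-functors. The gap lies in the categorical lemma you then propose to derive this from. It is \emph{not} true that the Hopf property of $H$ (preservation of right inner homs by $\LMod H\to\LMod{R^e}$) forces every half-braiding $\hbr_{X,V}\colon X\ot \F(V)\to\F(V)\ot X$ to be invertible, i.e.\ that $\WLC(\F)$ coincides with $\LC(\F)$. Already for an ordinary Hopf algebra over a field, taking $X=H$ with its regular coaction and $V=H$ the regular module, the half-braiding is $g\ot h\mapsto g_{(1)}h\ot g_{(2)}$, whose bijectivity is equivalent to the existence of a skew antipode, i.e.\ to $H^\cop$ being Hopf; it fails for Hopf algebras with non-bijective antipode. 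So the ``familiar fact'' you invoke is really the incarnation of the \emph{anti-Hopf} property, and indeed in the paper the equality of the weak left center with the center is exactly what the additional anti-Hopf hypothesis buys in the subsequent theorem on $\Lidu H$ being Hopf; the present theorem has no such hypothesis. Since your construction of the half-braiding on $\homl(A,B)$ needs $(\hbr^A_X)\inv$ for arbitrary $X$, it cannot get off the ground.

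What the Hopf property actually supplies, and what the paper uses instead, is rigidity: because $\LMod H\to\LMod{R^e}$ preserves right inner homs, finitely generated projective $H$-modules admit left duals in $\LMod H$. For left rigid $T\in\C$ the paper never inverts a half-braiding at $T$; it defines $\hbr_{\homl(X,Y),T}$ through the adjunction bijection with $\D(X\ot\F(\lidu T)\ot\homl(X,Y)\ot\F(T),Y)$, using only forward half-braidings at $T$ and at $\lidu T$ --- the latter playing the role of the would-be inverse, to which it corresponds under adjunction without having to exist as an actual inverse of $\hbr_{X,T}$ (see \cref{prop:1}). A colimit argument (\cref{cor:1}) then extends the construction from finitely generated projectives to all of $\LMod H$, a step absent from your sketch. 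If you replace ``invert $\hbr^A_X$ for all $X$'' by ``use $\hbr^A_{\lidu T}$ for rigid $T$ and pass to colimits'', your plan becomes the paper's proof.
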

\begin{proof}
  Clearly any object of $\LMod H$ is a direct limit of quotients of finitely generated projective objects. As observed in \cite{selfduality}, finitely generated projective $H$-modules admit left duals. Therefore, \cref{cor:1} shows that $\WLC(\LMod H\to\LMod{R^e})$ preserves left inner hom-functors. Thus $\LMod{\Lidu H}\to\LMod{R^e}$ preserves left inner hom-functors, whence the claim.
\end{proof}

\begin{Thm}
  Let $H$ be a left finite $\times_R$-Hopf algebra which is also a right finite $\times_R$-anti-Hopf algebra. Then $\Lidu H$ is $\times_R$-Hopf.
\end{Thm}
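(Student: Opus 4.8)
The plan is to run the argument of the preceding theorem ``on the other side''. By the equivalence $\LMod{\Lidu H}\cong\WLC(\LMod H\to\LMod{R^e})$ over $\LMod{R^e}$ recalled above, and since $\Lidu H$ being $\times_R$-Hopf means precisely that $\LMod{\Lidu H}\to\LMod{R^e}$ preserves right inner hom-functors, it suffices to show that $\WLC(\LMod H\to\LMod{R^e})\to\LMod{R^e}$ preserves right inner hom-functors. (Since $H$ is $\times_R$-Hopf, the preceding theorem already shows $\Lidu H$ to be $\times_R$-anti-Hopf, so the point at issue will complete the statement that $\Lidu H$ is Hopf on both sides.)

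First I would invoke the left--right mirror of \cref{cor:1}: if every object of $\LMod H$ is a direct limit of quotients of $H$-modules admitting \emph{right} duals, then $\WLC(\LMod H\to\LMod{R^e})$ preserves right inner hom-functors. I expect this mirror statement to be available from the next section, whose constructions are symmetric under simultaneously reversing the monoidal structures of source and target; failing that, one runs the whole of the next section for the coopposite $\times_{R^\op}$-bialgebra throughout. Granting it, everything reduces to exhibiting enough right-dualizable $H$-modules.

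For this I would pass to the coopposite $H^\cop$, a $\times_{R^\op}$-bialgebra on the same underlying algebra, with $\LMod{H^\cop}$ equal to $\LMod H$ carrying the reversed order of tensor products. Passing to the coopposite interchanges the two base-module structures that enter the finiteness conditions, so $H^\cop$ is left finite because $H$ is right finite; and $H^\cop$ is $\times_{R^\op}$-Hopf by the very definition of $H$ being $\times_R$-anti-Hopf. Hence the observation of \cite{selfduality} invoked in the previous proof applies to $H^\cop$ and shows that its finitely generated projective modules admit left duals in $\LMod{H^\cop}$; reversing the tensor order back, the corresponding finitely generated projective $H$-modules admit right duals in $\LMod H$. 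Exactly as before, every $H$-module is a direct limit of quotients of finitely generated projective $H$-modules, so the mirror of \cref{cor:1} applies and yields the claim.

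The step I expect to be the main obstacle is the first one: checking that the categorical development of the next section is genuinely symmetric enough to furnish the right inner hom version of \cref{cor:1} with exactly the hypotheses available here, and then keeping the bookkeeping consistent --- which copy of $R$ each finiteness condition refers to, left versus right duals, and the two one-sided inner hom-functors --- through the tensor-reversal, equivalently the passage to the coopposite. The remaining steps only transport through this symmetry the facts already established for the preceding theorem.
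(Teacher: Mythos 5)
Your overall route is the paper's: reduce to showing that $\WLC(\LMod H\to\LMod{R^e})\to\LMod{R^e}$ preserves right inner hom-functors, and feed into a one-sided variant of \cref{cor:1} the right duals of finitely generated projective $H$-modules that the anti-Hopf hypothesis provides (via the coopposite and \cite{selfduality}), with right finiteness accounting for the coopposite being left finite. But the step you yourself single out as the main obstacle is where a genuine idea is missing, and pure left--right symmetry will not supply it. The honest mirror of \cref{cor:1} --- reverse the tensor products of $\C$ and $\D$ simultaneously, equivalently run the whole section for $H^\cop$ --- is a statement about the \emph{right} weak centralizer $\WRC(\F)$: right duals in $\C$ and right inner hom-functors in $\D$ give right inner hom-functors on $\WRC(\F)$ preserved by the underlying functor. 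It says nothing directly about $\WLC(\F)$, which is the category equivalent to $\LMod{\Lidu H}$; objects of $\WLC(\F)$ and $\WRC(\F)$ carry half-braidings pointing in opposite directions and are not interchanged by any formal tensor-reversal.

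The missing bridge is the remark in Section 3 that $\hbr_{X,\lidu V}$ is invertible for $X\in\WLC(\F)$ and $V$ left rigid. Because the anti-Hopf hypothesis makes every finitely generated projective $H$-module a left dual (namely of its right dual), all half-braidings of objects of $\WLC(\F)$ are invertible --- first against finitely generated projectives, then against arbitrary modules by the colimit argument of \cref{cor:1} --- so the weak left centralizer coincides with the centralizer and hence with the right (weak) centralizer. This is exactly the content of the paper's opening sentence, ``since $H$ is anti-Hopf, the weak left center equals the right center''; only after this identification does the mirrored \cref{cor:1} apply to the category you need. Your remaining bookkeeping (the coopposite swaps the two finiteness conditions and left/right duals; every module is a direct limit of quotients of finitely generated projectives) matches the paper.
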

\begin{proof}
  Since $H$ is also anti-Hopf, the weak left center $\WLC(\LMod H\to\LMod{R^e})$ equals the right center. Thus, we can apply \cref{cor:1} to show that $\LMod H\to\LMod{R^e}$ preserves right inner hom-functors.
\end{proof}

In \cite{MR1800718} the Drinfeld double $D(H)$ of a left finite $\times_R$-Hopf algebra was defined to be a $\times_R$-bialgebra with the property that $\LMod{D(H)}\cong\WLC(\LMod H):=\WLC(\Id\colon\LMod H\to\LMod H)$. Without having to care about the details of the construction, we can obtain
\begin{Thm}
  Let $H$ be a left finte $\times_R$-Hopf algebra which is also $\times_R$-anti-Hopf. Then $D(H)$ is a $\times_R$-anti-Hopf algebra. If in addition $H$ is right finite, then $D(H)$ is also $\times_R$-Hopf.
\end{Thm}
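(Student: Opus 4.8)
The plan is to deduce both assertions from \cref{cor:1}, exactly in the spirit of the preceding proofs, via the defining equivalence $\LMod{D(H)}\cong\WLC(\LMod H)=\WLC(\Id\colon\LMod H\to\LMod H)$. Under it the underlying functor $\LMod{D(H)}\to\LMod{R^e}$ is the composite
\[
\WLC(\LMod H)\xrightarrow{\ U\ }\LMod H\longrightarrow\LMod{R^e},
\]
$U$ forgetting the half-braiding and the second arrow being the $\times_R$-bialgebra structure functor of $H$. A composite of strict monoidal functors each preserving left (resp.\ right) inner hom-functors again preserves them, so it suffices to treat the two factors separately. Moreover, since $H$ is both $\times_R$-Hopf and $\times_R$-anti-Hopf, $\LMod H$ possesses both right and left inner hom-functors, which makes \cref{cor:1} applicable to the identity functor $\LMod H\to\LMod H$ in place of the structure functor of $H$.

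For the anti-Hopf assertion, the second factor $\LMod H\to\LMod{R^e}$ preserves left inner hom-functors because $H$ is $\times_R$-anti-Hopf. For the first factor, apply \cref{cor:1} to $\Id\colon\LMod H\to\LMod H$: it trivially preserves right inner hom-functors (which exist in $\LMod H$ since $H$ is Hopf), and, $H$ being left finite, every $H$-module is a direct limit of quotients of finitely generated projective $H$-modules, which admit left duals by \cite{selfduality}. Hence $U$ preserves left inner hom-functors, and composing, $\LMod{D(H)}\to\LMod{R^e}$ does so, i.e.\ $D(H)$ is $\times_R$-anti-Hopf. This part does not use right finiteness.

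For the Hopf assertion, add the hypothesis that $H$ is right finite. Now $\LMod H\to\LMod{R^e}$ preserves right inner hom-functors because $H$ is Hopf, so it remains to show $U$ preserves right inner hom-functors. I would first argue, as in the proof of the theorem asserting that $\Lidu H$ is $\times_R$-Hopf, that the half-braidings occurring in objects of $\WLC(\LMod H)$ are automatically isomorphisms --- they are so on objects admitting a dual, hence, by left finiteness together with the generation property above, on all of $\LMod H$ --- so that $\WLC(\LMod H)$ coincides with the weak right center $\WRC(\LMod H)$. The left--right-dual form of \cref{cor:1}, applied to $\Id$ (which preserves left inner hom-functors since $H$ is anti-Hopf, with every $H$-module a direct limit of quotients of finitely generated projectives, these admitting right duals because $H$ is now right finite), then shows that $\WRC(\LMod H)\to\LMod H$ preserves right inner hom-functors; identifying this functor with $U$ and composing yields that $D(H)$ is $\times_R$-Hopf. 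The delicate step, which I expect to be the main obstacle, is precisely this passage between the weak left and weak right center --- the verification that invertibility of half-braidings on dualizable objects propagates through direct limits of quotients to arbitrary $H$-modules; everything else is formal bookkeeping, the real content residing in \cref{cor:1}.
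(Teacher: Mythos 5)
Your proposal is correct and follows essentially the same route as the paper's (much terser) proof: factor $\LMod{D(H)}\to\LMod{R^e}$ through $\LMod H$, apply \cref{cor:1} to $\Id\colon\LMod H\to\LMod H$ for the left inner hom-functors, and for the right inner hom-functors identify the weak left center with the right center via invertibility of half-braidings on dualizable objects, extended by colimits. The only quibbles are bookkeeping: \cref{cor:1} applied to $\Id$ needs $\LMod H$ to have \emph{left} inner hom-functors (supplied by anti-Hopfness, not by Hopfness), and the invertibility of $\hbr_{X,W}$ requires $W$ to admit a \emph{right} dual, which comes from right finiteness together with anti-Hopfness rather than from left finiteness --- all ingredients available under your hypotheses, so the argument stands.
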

\begin{proof}
  By the same reasoning as before, the underlying functor $\LMod{D(H)}\to\LMod H$ preserves left inner hom-functors. But so does $\LMod H\to\LMod{R^e}$ and thus $\LMod{D(H)}\to\LMod{R^e}$. Thus $D(H)$ is a $\times_R$-anti-Hopf algebra. Under the additional hypothesis, $\LMod{D(H)}\to\LMod H$ and therefore $\LMod{D(H)}\to\LMod{R^e}$ also preserves right inner-hom functors.
\end{proof}

\section{Hom functors in centralizer categories}
\label{sec:hom-funct-centr}

Our results on the duals and doubles of Hopf algebroids were based on tailor-made results on inner hom-functors in monoidal categories and generalizations of the Drinfeld center. This section contains those categorical arguments, following a presentation of the necessary categorical notions.

\subsection{Categorical preliminaries}
\label{sec:categ-prel}

We will use the language of monoidal categories, suppressing the associativity constraints in view of the well-known coherence results. A general reference is \cite{Kas:QG}.

We denote the neutral object of a monoidal category $\C$ by $I$. The left dual of an object $X\in\C$, if it exists, is denoted by $\lidu X$, it is equipped by definition with evaluation and coevaluation morphisms
\[\ev\colon \lidu X\ot X\to I\quad\text{and}\quad\db\colon I\to X\ot\lidu X.\]
An object admitting a left dual will be called left rigid.

A right inner hom-functor $\hom_r(X,\leer)$ in $\C$ is a right adjoint to the endofunctor $\leer\otimes X$, a left inner hom-functor $\homl(X,\leer)$ is a right adjoint to $X\ot\leer$. We denote the adjunction morphisms by $\hev\colon X\ot\homl(X,Y)\to Y$ and $\hcoev\colon Y\to\homl(X,X\ot Y)$.

If $X$ is left rigid, then $\leer\otimes X^*$ is a right inner hom-functor. If a right inner hom-functor $\hom_r(X,\leer)$ exists, and the canonical morphism $Y\ot\hom_r(X,\leer)\to \hom_r(X,Y)$ defined by the adjunction is an isomorphism for all $Y\in\C$, then $X$ is left rigid.

A monoidal functor $(\F,\monstruk)\colon \C\to \D$ between monoidal categories $\C$ and $\D$ consists of a functor, a natural isomorphism $\monstruk\colon \F(X\ot Y)\to \F(X)\ot \F(Y)$ (often suppressed) and an isomorphism $\monstruk_0\colon \F(I)\to I$ (very often suppressed) that satisfy coherence conditions with the associativity constraints of $\C$ and $\D$.

It is well-known that monoidal functors preserve dual objects: If $T\in\C$ has a left dual $\lidu T$, then $U=\F(T)$ has the left dual $\F(\lidu T)$ with evaluation $\F(\lidu T)\ot \F(T)\xrightarrow{\monstruk\inv} \F(\lidu T\ot T)\xrightarrow{\F(\ev)}\F(I)\cong I$ and coevaluation $I\cong \F(I)\xrightarrow{\F(\db)} \F(T\ot\lidu T)\xrightarrow{\monstruk} \F(T)\ot \F(\lidu T)$. We will take the liberty to identify the dual $\lidu U$ (normally defined already in the category $\D$) with $\F(\lidu T)$.

If right hom-functors $\homl(X,\leer)$ and $\homl(\F(X),\leer)$ exist, then there is a natural morphism $\F\homl(X,Y)\to\homl(\F(X),\F(Y))$ induced by $\F(X)\ot \F(\homl(X,Y))\xrightarrow{\monstruk} \F(X\ot\homl(X,Y))\xrightarrow{\F(\hev)}\F(Y)$. We say that $\F$ preserves left inner hom-functors, if these natural morphisms are isomorphisms.

  Let $(\F,\monstruk)\colon \C \to \D$ be a monoidal functor. The \emph{left
    weak centralizer} $\WLC(\F)$ of $\F$ is the category whose objects are pairs
  $(X,\hbr)$ in which $X$ is an object of $\D$, and
  $\hbr=\hbr_{X,V}\colon X\ot \F(V)\to \F(V)\ot X$ is a natural transformation
  making
  \begin{equation}
    \xymatrix{X\ot \F(V\ot W)\ar_{\hbr}[dd]\ar^-{X\ot\monstruk}[r]
&X\ot \F(V)\ot \F(W)\ar^{\hbr\ot \F(W)}[d]
\\
&\F(V)\ot X\ot \F(W)\ar^{\F(V)\ot\hbr}[d]
\\
\F(V\ot W)\ot X\ar_-{\monstruk\ot X}[r]
&\F(V)\ot \F(W)\ot X}
  \end{equation}
  commute and satisfying $\hbr=\id:X\ot \F(I)\to \F(I)\ot X$.

  The left weak centralizer is a monoidal category, the tensor product
  of $X$ and $Y$ being the tensor product $X\ot Y$ in $\D$ endowed with
  \begin{equation*}
    \hbr=\left( X\ot Y\ot \F(V)\overset{X\ot\hbr}\longrightarrow X\ot \F(V)\ot Y\overset{\hbr\ot Y}\longrightarrow \F(V)\ot X\ot Y\right)
  \end{equation*}
We refer to $\hbr$ as a half-braiding, though this is more customary for the special case that $\C=\D$ and $\F$ is the identity functor.
  
The right weak centralizer $\WRC(\F)$ is defined similarly. The (left)
centralizer $\CTR(\F)=\LC(\F)$ consists of those objects $(X,\hbr)$
in the left weak centralizer where $\hbr$ is an isomorphism. It is
naturally equivalent to the right centralizer. Centralizers and weak
centralizers were introduced by Majid \cite{Maj:RDQDMC}, who calls the
right weak centralizer of $\F$ the dual of the functored category
$(\C,\F)$.

We note that if $ V\in\C$ is left rigid, and $ X\in\WLC(\F) $, then $ \hbr_{X,\lidu V} $ is an isomorphism. The inverse
$\hbr_{X,\lidu V}\inv\in\D(\F(\lidu V)\ot X,X\ot \F(\lidu V))$ corresponds to $\hbr_{X,V}$ under the bijection
\begin{equation*}
  \D(\F(\lidu V)\ot X,X\ot \F(\lidu V))\cong \C(X\ot \F(V),\F(V)\ot X)
\end{equation*}
coming from $\leer\ot \F(\lidu V)$ being a right inner hom-functor and $\F(V)\ot\leer$ being a left inner hom-functor.

\subsection{Hom functors in centralizer categories}
\label{sec:hom-funct-centr-1}
Our key result says that under favorable conditions weak centralizer categories have inner hom-functors preserved by underlying functors:

\begin{Prop}\label{prop:1}
  Let $\F\colon\C\to\D$ be a monoidal functor. Assume that $\D$ has left inner hom-functors, and that $\C$ is left rigid. Then $\WLC(\F)$ admits left inner hom-functors, and the underlying functor $\WLC(\F)\to\D$ preserves them.
\end{Prop}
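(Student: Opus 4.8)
The plan is to construct, for objects $(X,\hbr),(Y,\hbr)$ of $\WLC(\F)$, a right adjoint $R$ to the endofunctor $(X,\hbr)\ot\leer$ such that $R(Y,\hbr)$ has underlying object the $\D$-valued inner hom $\homl(X,Y)$ and counit the $\D$-counit $\hev\colon X\ot\homl(X,Y)\to Y$. Granting this, the underlying functor $U\colon\WLC(\F)\to\D$, which is strict monoidal, preserves left inner hom-functors automatically: the comparison morphism $\homl(X,Y)\to\homl(X,Y)$ is the unique $\phi$ with $\hev\circ(X\ot\phi)=\hev$, hence the identity. So the entire task is (i)~to equip $\homl(X,Y)$ with a half-braiding and (ii)~to lift the $\D$-adjunction $X\ot\leer\dashv\homl(X,\leer)$ to $\WLC(\F)$.

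For (i), fix $(X,\hbr),(Y,\hbr)\in\WLC(\F)$ and abbreviate $P:=\homl(X,Y)$. Because $\C$ is left rigid and $(\F,\monstruk)$ is monoidal, each $\F(V)$ is left rigid in $\D$ with left dual $\F(\lidu V)$, so $\F(\lidu V)\ot\leer$ is left adjoint to $\F(V)\ot\leer$, with unit and counit built from $\db$ and $\ev$. Composing this adjunction with $X\ot\leer\dashv\homl(X,\leer)$, a morphism $P\ot\F(V)\to\F(V)\ot P$ corresponds to a morphism $X\ot\F(\lidu V)\ot P\ot\F(V)\to Y$, and I would let $\rho_V\colon P\ot\F(V)\to\F(V)\ot P$ be the one corresponding to
\begin{align*}
  X\ot\F(\lidu V)\ot P\ot\F(V)
  &\xrightarrow{\hbr_{X,\lidu V}}\F(\lidu V)\ot X\ot P\ot\F(V)\\
  &\xrightarrow{\hev}\F(\lidu V)\ot Y\ot\F(V)
  \xrightarrow{\hbr_{Y,V}}\F(\lidu V)\ot\F(V)\ot Y
  \xrightarrow{\ev}Y
\end{align*}
(each arrow being the displayed morphism tensored with the evident identities). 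As a sanity check, for $X=I$ (so $P\cong Y$) this recovers the half-braiding of $(Y,\hbr)$ via the zig-zag identities. Naturality of $\rho_V$ in $V$ and the normalization $\rho_I=\id$ should follow from the corresponding properties of the half-braidings of $(X,\hbr)$ and $(Y,\hbr)$, the zig-zag identities, and the coherence isomorphisms $\monstruk$, $\monstruk_0$; and the hexagon identity for $\rho$ should fall out after expanding $\hbr_{X,\lidu{(V\ot W)}}$ and $\hbr_{Y,V\ot W}$ by means of the hexagons for the half-braidings of $(X,\hbr)$ and $(Y,\hbr)$ together with the compatibility of $\ev$, $\db$ with the identification $\F(\lidu{(V\ot W)})\cong\F(\lidu W)\ot\F(\lidu V)$.

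For (ii), I would verify that under $\D(X\ot Z,Y)\cong\D(Z,P)$ a morphism $f\colon X\ot Z\to Y$ underlies a morphism $(X,\hbr)\ot(Z,\zeta)\to(Y,\hbr)$ in $\WLC(\F)$ exactly when its adjunct $g\colon Z\to P$ underlies a morphism $(Z,\zeta)\to(P,\rho)$. Since the half-braiding of $(X,\hbr)\ot(Z,\zeta)$ is $(\hbr_{X,V}\ot Z)\circ(X\ot\zeta_{Z,V})$, this is a direct, if lengthy, computation: transport the intertwining condition on $g$ through the two adjunctions above (the codomain $\F(V)\ot P$ being absorbed by $\F(\lidu V)\ot\leer\dashv\F(V)\ot\leer$), substitute the definition of $\rho$, cancel the coevaluation against the evaluation by a zig-zag, and recognize the outcome as the intertwining condition on $f$ conjugated by $\ev$ and $\db$. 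Functoriality of $(Y,\hbr)\mapsto(P,\rho)$ and naturality of the bijection come out of the same calculation. This produces a right adjoint to $(X,\hbr)\ot\leer$ with counit lifting $\hev$, i.e.\ a left inner hom-functor on $\WLC(\F)$, which $U$ then preserves, as observed at the outset.

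I expect the only genuine difficulty to lie in the two diagram chases in (i) and (ii) --- above all the hexagon identity for $\rho$ --- which require carrying the two adjunctions together with the several unlabelled coherence and duality isomorphisms along simultaneously. The invertibility of the components $\hbr_{X,\lidu V}$ recorded above is available and reassuring here, although the decomposition of $\hbr_{X,\lidu{(V\ot W)}}$ that the hexagon calls for can in fact be read directly off the hexagon for the half-braiding of $(X,\hbr)$ without inverting anything.
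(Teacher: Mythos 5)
Your proposal is correct and follows essentially the same route as the paper: the half-braiding you put on $\homl(X,Y)$ is, under the very same composite adjunction through $\F(\lidu V)$, exactly the paper's composite $\ev\circ\hbr_{Y,V}\circ\hev\circ\hbr_{X,\lidu V}$, and the remaining verifications (normalization, hexagon, and lifting the hom-tensor adjunction, which the paper phrases as the unit and counit being morphisms in $\WLC(\F)$) are precisely the diagram chases the paper carries out.
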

\begin{proof}
  In other words, for $X,Y\in\D$ and $T\in\C$ we have to construct a half-braiding
  \begin{equation}
    \label{eq:1}
    \hbr=\hbr_{\hom(X,Y),T}\colon\homl(X,Y)\ot \F(T)\to\F(T)\ot\homl(X,Y)
  \end{equation}
  and show that the unit and counit
  \begin{align}
    \label{eq:2}
    X\ot\homl(X,Y)&\to Y\\
    Y&\to\homl(X,X\ot Y)
  \end{align}  
  of the hom-tensor adjunction are morphisms in $\WLC(\F)$.

  Not the least challenge in the proof is fitting the necessary diagrams on a page. To manage this, we shall write $\F(T)=U$ and identify $\F(\lidu T)$ with $\lidu U$. Moreover, we will write $\homl(X,Y)=:[X,Y]=:[XY]$, and in commutative diagrams we shall moreover suppress the tensor product symbol.

  We use the bijection
  \begin{equation}
    \label{eq:4}
    \D([X,Y]\ot U,U\ot[X,Y])\cong\D(X\ot\lidu U\ot [X,Y]\ot U,Y)
  \end{equation}
  to define $\hbr_{[X,Y],\leer}$ by commutativity of
  \begin{equation*}
    \xymatrix{X\lidu U[X,Y]U\ar[rr]^{X\lidu U\hbr}\ar[d]_{\hbr[X,Y]U}&&X\lidu UU[X,Y]\ar[d]^{X\ev[X,Y]}\\
      \lidu U X[X,Y]U\ar[d]_{\lidu U\hev U}&&X[X,Y]\ar[d]^{\hev}\\
      \lidu U YU\ar[r]^{\lidu U\hbr}&\lidu UU Y\ar[r]^{\ev Y}&Y}
  \end{equation*}

  Given that $\hbr_{X,\F(\lidu T)}$ is an isomorphism, the same diagram expresses the image of
  \begin{equation*}
    X\ot\homl(X,Y)\ot U\xrightarrow{\hev\ot U}Y\ot U\xrightarrow\hbr U\ot Y
  \end{equation*}
  under the bijection
  \begin{equation*}
    \C(X[XY]U,UY,UY)\cong\C(\lidu UX[XY]U,Y).
  \end{equation*}
  Therefore, commutativity of
  \begin{equation*}
    \xymatrix{\lidu UX[XY]U\ar[r]^{\lidu UX\hbr}
      &\lidu UXU[XY]\ar[r]^{\lidu U\hbr[XY]}&\lidu UUX[XY]\ar[ddl]^{\ev X[XY]}\ar[d]^{\lidu UU\hev}\\
      X\lidu U[XY]U\ar[r]^{X\lidu U\hbr}\ar[u]^{\hbr[XY]U}&X\lidu UU[XY]\ar[u]^{\hbr U[XY]}\ar[d]_{X\ev[XY]}&\lidu UUY\ar[d]^{\ev Y}\\
      &X[XY]\ar[r]_{\hev}&Y
    }
  \end{equation*}
  proves that
  \begin{equation*}
    \xymatrix{X[XY]U\ar[r]^{X\hbr}\ar[d]^{\hev U}&XU[XY]\ar[r]^{\hbr[XY]}&UX[XY]\ar[d]_{U\hev}\\
      YU\ar[rr]^{\hbr}&&UY}
  \end{equation*}
  commutes.

  For $S,T\in\C$, $U=\F(S)$ and $V=\F(T)$, the diagram
  {\small
  \begin{equation*}
    \xymatrix{X\lidu V\lidu U[X,Y]UV\ar@/^/[r]^-{X\lidu V\lidu U\hbr V}\ar[d]_{\hbr\lidu U[XY]UV}
      &X\lidu V\lidu UU[X,Y]V\ar@/^/[r]^-{X\lidu V\lidu UU\hbr}\ar[d]_{\hbr\lidu UU[XY]V}\ar[ddr]^(.8){X\lidu V\ev[XY]V}
      &X\lidu V\lidu UUV[X,Y]\ar@<2em>@(r,r)_(.3){X\lidu V\ev V[XY]}[ddd]\\
      \lidu VX\lidu U[X,Y]UV\ar@/^/[r]^-{\lidu VX\lidu U\hbr V}\ar[d]_{\lidu V\hbr[XY]V}
      &\lidu VX\lidu UU[X,Y]V\ar[d]_{\lidu VX\ev[XY]V}&
      \\
      \lidu V\lidu UX[X,Y]UV\ar[d]^{\lidu V\lidu U\hev UV}
      &\lidu VX[X,Y]V\ar[d]^{\lidu V\hev V}&
      X\lidu V[X,Y]V\ar[d]^{X\lidu V\hbr}\ar[l]^{\hbr[XY]V}\\
      \lidu V\lidu UYUV\ar[d]^{\lidu V\lidu U\hbr V}&\lidu VYV\ar[d]^{\lidu V\hbr}&X\lidu VV[X,Y]\ar[d]^{X\ev[XY]}\\
      \lidu V\lidu UUYV\ar[ur]_{\lidu V\ev YV}\ar[d]^{\lidu V\lidu UU\hbr}&\lidu VVY\ar[dr]^{\ev Y}&X[X,Y]\ar[d]^{\hev}\\
      \lidu V\lidu UUVY\ar[ur]_{\lidu V\ev VY}&&Y}
  \end{equation*}}
  shows that the latter is a half-braiding. The two heptagons in the diagram are the definition of $\hbr_{\homl(X,Y),\leer}$, while the remaining fields commute for trivial reasons
  
  That the unit $\hcoev\colon Y\to\homl(X,X\ot Y)$ of the hom-tensor adjunction is a morphism in $\WLC(\F)$ amounts to equality of two morphisms $Y\ot \F(T)\to \F(T)\ot\homl(X,X\ot Y)$ which we will verify after applying the bijection
  \begin{equation*}
    \D(Y\ot U,U\ot[X,X\ot Y])\cong\D(X\ot \lidu U\ot Y\ot U, X\ot Y)
  \end{equation*}
  by the diagrams
  \begin{equation*}
    \xymatrix{X\lidu UYU\ar@<1ex>[r]^{X\lidu U\hcoev U}\ar[d]_{\hbr YU}
      &X\lidu U[X,XY]U\ar[r]^{X\lidu U\hbr}\ar[d]^{\hbr[X,XY]U}
      &X\lidu UU[X,XY]\ar[d]^{X\ev[X,XY]}\\
      \lidu UXYU\ar@<1ex>[r]^{\lidu UX\hcoev U}\ar@{=}[d]
      &\lidu UX[X,XY]U\ar[dl]_{\lidu U\ev U}&X[X,XY]\ar[d]^{\hev}\\
      \lidu UXYU\ar[r]^{\lidu U\hbr}&\lidu UUXY\ar[r]^{\ev XY}&XY}
  \end{equation*}
  and
  \begin{equation*}
    \xymatrix{X\lidu UYU\ar[r]^{X\lidu U\hbr}\ar[d]_{\hbr YU}
      &X\lidu UUY\ar[ddl]^{\hbr UY}\ar[d]^{X\ev Y}\ar@<1ex>[r]^{X\lidu UU\hcoev}
    &X\lidu UU[X,XY]\ar[d]^{X\ev[X,XY]}\\
    \lidu UXYU\ar[d]_{\lidu UX\hbr}&XY\ar[r]^{X\hcoev Y}\ar@{=}[dr]&X[X,XY]\ar[d]^{\ev}\\
    \lidu UXUY\ar[r]_{\lidu U\hbr Y}&\lidu UUXY\ar[u]^{\ev XY}&XY}
  \end{equation*}
\end{proof}

\begin{Cor}\label{cor:1}
  Let $\C$ and $\D$ be abelian monoidal categories with colimits, and $\F\colon \C\to \D$ a cocontinuous additive monoidal functor.

  Assume that $\D$ has left inner hom-functors. Assume that every object in $\C$ is the directed union of finitely generated subobjects, that every finitely generated object of $\C$ is the quotient of a finitely generated projective object, and that finitely generated projective objects in $\C$ admit left duals.

  Then $\WLC(\F)$ has left inner hom-functors, and the underlying functor $\WLC(\F)\to\D$ preserves them.
\end{Cor}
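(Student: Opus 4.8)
The plan is to reduce the hypotheses of the corollary to those of \cref{prop:1} by a limit argument. The only gap between the two statements is that \cref{prop:1} requires $\C$ itself to be left rigid, whereas here we merely know that $\C$ is generated under directed colimits and quotients by finitely generated projective objects, which are assumed left rigid. So the first step is to replace $\C$ by its full subcategory $\C_0$ of finitely generated projective objects. This $\C_0$ is a (non-abelian, non-cocomplete) monoidal category — one must check it is closed under $\ot$ and contains $I$, which holds because $\ot$ preserves finitely generated projectives and duals multiply — and it is left rigid by hypothesis. Applying \cref{prop:1} to the restricted functor $\F_0\colon\C_0\to\D$ gives left inner hom-functors on $\WLC(\F_0)$, preserved by the underlying functor to $\D$.

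The second step is to identify $\WLC(\F)$ with $\WLC(\F_0)$. A half-braiding $\hbr_{X,-}$ for $\F$ assigns to every $V\in\C$ a map $X\ot\F(V)\to\F(V)\ot X$ natural and compatible with $\monstruk$; restricting to $V\in\C_0$ gives an object of $\WLC(\F_0)$. Conversely, given a half-braiding over $\C_0$, one extends it to all of $\C$: since $\F$ is cocontinuous and every $V\in\C$ is a directed colimit of quotients of objects of $\C_0$, and since $X\ot(-)$ and $(-)\ot X$ are themselves cocontinuous (they are right adjoints only on one side, but they do preserve the relevant colimits because $\D$ has left inner hom-functors, so $X\ot(-)$ has a right adjoint and hence preserves all colimits — wait, this needs care, see below), the half-braiding extends uniquely by continuity. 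One then checks the hexagon and unitality survive the colimit, which is formal. This establishes a monoidal isomorphism $\WLC(\F)\cong\WLC(\F_0)$ commuting with the underlying functors to $\D$, and the corollary follows.

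The main obstacle is exactly the parenthetical worry above: the extension of half-braidings from $\C_0$ to $\C$ requires that $X\ot(-)$ and $(-)\ot X$ on $\D$ preserve the colimits used to build objects of $\C$ out of $\C_0$ — directed colimits and cokernels. For $X\ot(-)$ this is free: $\D$ has left inner hom-functors, so $X\ot(-)$ is a left adjoint and preserves all colimits. For $(-)\ot X$ one does \emph{not} a priori have a right adjoint, so one must use additivity of $\F$ and $\ot$ together with the abelian structure: cokernels are preserved because $(-)\ot X$ is additive and right exact in an abelian monoidal category (this is where "abelian monoidal" in the hypothesis is used), and directed colimits are preserved because... here one genuinely needs that $\ot$ on $\D$ preserves directed colimits in each variable, which should be extracted from the existence of left inner hom-functors on \emph{both} sides or stated as part of "abelian monoidal category with colimits." Assuming that, naturality of the extended $\hbr$ in $V$ follows since both source and target functors $X\ot\F(-)$ and $\F(-)\ot X$ are cocontinuous and agree with the given transformation on the generating subcategory $\C_0$.

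A cleaner alternative, which I would actually prefer to write, avoids extending half-braidings altogether: show directly that the forgetful functor $\WLC(\F)\to\WLC(\F_0)$ is an equivalence by exhibiting its inverse via a Kan-extension/density argument — every $V\in\C$ being a colimit of a diagram in $\C_0$ (the comma category $\C_0/V$ is filtered by the directed-union-of-finitely-generated plus finitely-generated-is-a-quotient hypotheses), one defines $\hbr_{X,V}$ as the colimit of $\hbr_{X,T}$ over $T\to V$ in $\C_0/V$, well-definedness and functoriality being automatic from the colimit's universal property. Either way, once $\WLC(\F)\cong\WLC(\F_0)$ is in hand, \cref{prop:1} applied to $\F_0$ finishes the argument, since "preserves left inner hom-functors" is a statement about the underlying functor to $\D$, which is unchanged under the equivalence.
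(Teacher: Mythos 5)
Your overall architecture --- apply \cref{prop:1} to the rigid part of $\C$ and then transport the conclusion along a colimit extension of half-braidings --- is exactly the strategy of the paper's proof, which performs the same extension directly on the single half-braiding $\hbr_{\homl(X,Y),\leer}$ rather than packaging it as an equivalence $\WLC(\F)\simeq\WLC(\F_0)$. However, two steps of your write-up do not hold up as stated. The first is the choice of $\C_0$: the full subcategory of finitely generated projective objects need not be a monoidal subcategory, since nothing in the hypotheses makes $I$ finitely generated projective, nor the tensor product of two finitely generated projectives again finitely generated projective (the hypotheses say that f.g.\ projectives are left rigid, not the converse, so ``duals multiply'' does not rescue this). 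The repair is to take $\C_0$ to be the full subcategory of left rigid objects: it contains $I$, is closed under $\ot$ because $\lidu{(S\ot T)}=\lidu T\ot\lidu S$, and contains all f.g.\ projectives by hypothesis, so it still receives the presentations of arbitrary objects of $\C$.

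The second and more serious issue is your treatment of $\leer\ot X$. You correctly observe that $\leer\ot X$ has no right adjoint under the stated hypotheses, but you then resolve this by proposing to strengthen the hypotheses (inner hom-functors on both sides, or $\ot$ preserving directed colimits in each variable), which would prove a different, weaker corollary than the one stated. In fact no strengthening is needed, because the colimit variable only ever has to be pulled out of the \emph{source} of the morphisms being defined or compared. To define $\hbr_{X,M}$ for $M=\operatorname{colim}P_\bullet$ one writes $X\ot\F(M)\cong\operatorname{colim}\bigl(X\ot\F(P_\bullet)\bigr)$, using only that $\F$ is cocontinuous and that $X\ot\leer$ has the right adjoint $\homl(X,\leer)$, and then maps the cocone $X\ot\F(P_i)\xrightarrow{\hbr}\F(P_i)\ot X\to\F(M)\ot X$ out of it; the target $\F(M)\ot X$ is merely a codomain and never needs to be recognized as a colimit. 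Uniqueness, naturality, and the compatibility of $\hev$ and $\hcoev$ with the half-braidings are likewise equalities of morphisms out of objects of the form $Z\ot\F(V)$ with the colimit taken in $V$, so they follow from the rigid case. The only place where the second tensor factor varies is the hexagon for $\hbr_{V\ot W}$; there one reduces first in $V$ with $W=Q$ rigid, where the relevant functor $V\mapsto X\ot\F(V)\ot\F(Q)$ is cocontinuous because $\F(Q)$ is rigid in $\D$ (so $\leer\ot\F(Q)$ has right adjoint $\leer\ot\F(\lidu Q)$), and then in $W$ with $V$ arbitrary, where the functor is $\bigl(X\ot\F(V)\bigr)\ot\F(\leer)$, again a left adjoint composed with $\F$. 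With these two repairs your argument closes and agrees in substance with the paper's.
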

\begin{proof}
  Arguing as in the proof of the preceding proposition, we can first define $\hbr\colon\homl(X,Y)\ot\F(P)\to \F(P)\ot\homl(X,Y)$ for finitely generated projective $P$. Any finitely generated $M\in\C$ can be written as the colimit of a diagram $P_i\xrightarrow{f_i}P $ by choosing an epimorphism $P\to M$ from a finitely generated projective $P$, and writing the kernel as the directed union of finitely generated subobjects, which are epimorphic images of finitely generated projective objects $P_i$. By our hypotheses, this allows to extend the definition of the half-braiding to $\hbr\colon\homl(X,Y)\ot\F(M)\to\F(M)\ot\homl(X,Y)$; this in a unique and natural way, since every morphism $M\to M'$ can be lifted, for any choice of epimorphisms $P\to M$ and $P'\to M'$, to a commutative square by a morphism $P\to P'$. Finally, the half-braiding can be extended to any object of $\C$ since these objects are assumed to be directed unions of finitely generated subobjects.
\end{proof}

\bibliographystyle{plain}
\bibliography{eigene,andere,arxiv,mathscinet}
\end{document}